\newtheorem{theorem}{Theorem}
\theoremstyle{plain}
\newtheorem{corollary}{Corollary}
\newtheorem{definition}{Definition}
\newtheorem{lemma}{Lemma}
\newtheorem{remark}{Remark}
\def\p{\mathfrak{p}}
\def\q{\mathfrak{q}}
\def\m{\mathfrak{m}}
\def\supp{\mathrm{supp}}
\def\co{\mathscr{O}}
\begin{document}
\title[]{Counting $w$-coprime $S$-integers and $S$-integral ideals\\ in positive characteristic}
\author{Si-Han Liu, Zhe-Cheng Liu, and Jia-Yan Yao}
\date{\today}
\subjclass{11R45, 11R59}
\keywords{$w$-coprime, $S$-integers, $S$-integral ideals, function fields, finite fields}

\begin{abstract}
Let $\mathbb{F}_q$ be the finite field with $q$ elements,
and let $K$ be an algebraic function field over $\mathbb{F}_q$
whose field of constants is $\mathbb{F}_q$.
Let $S$ be a finite\linebreak nonempty set of prime divisors over $K$, and let $\co_S$ be the ring of integers of~$K$ attached to $S$.
Let $w\geqslant 1$ be an integer. In this work we shall count
 $w$-coprime $S$-integers and $S$-integral ideals, and our proofs are a combination of analytic methods,
the Riemann-Roch theorem, and the Weil theorem for function fields in positive characteristic.
\end{abstract}

\maketitle

\markboth{Counting $w$-coprime $S$-integers and $S$-integral ideals in positive characteristic}
{Si-Han Liu, Zhe-Cheng Liu, and Jia-Yan Yao}%

\subsection{Introduction\label{section1}}

It is well known that the probability for two natural numbers chosen at random to be coprime is equal to $\frac{6}{\pi^2}$ (see for example \cite[pp.\,114--115]{hualuogeng2}). This result has been generalized in two directions.
On the one hand, one can consider the points in $\mathbb{P}^1(\mathbb{Q})$, where each point can be identified with two coprime integers in $\mathbb{Z}$, and it has been shown (see for example \cite[p.\,71]{lang}) that
\begin{equation*}
N(B)=\frac{6B^2}{\pi^2}+O({B\log B}),
\end{equation*}
where for all integers $B>0$, we denote by $N(B)$ the numbers of all points $x\in \mathbb{P}^1(\mathbb{Q})$ with height $H(x)\leqslant B$.
Recall here that
for all $x=(x_1:x_2)\in \mathbb{P}^1(\mathbb{Q})$ with $x_1,x_2\in \mathbb{Z}$ and $\gcd(x_1,x_2)=1$, the height of $x$ is defined by
$$
H(x)=\max(|x_1|,|x_2|).
$$
More generally, for a finite extension $\mathbb{K}$ of $\mathbb{Q}$ and for all integers $n\geqslant 2$,
Schanuel established in \cite{schanuel}
the asymptotic formula for $N_\mathbb{K}(B)$ with $B>0$,
where $N_\mathbb{K}(B)$ is the number of points in $\mathbb{P}^{n-1}(\mathbb{K})$ with height $\leqslant B$.
An analogous result for algebraic function fields was observed firstly by Serre \cite{serre} and proved independently by Dipippo \cite{Dipi}  and Wan \cite{wandaqing}.

On the other hand, for all integers $n,m\geqslant 1$,  one can also consider
\begin{eqnarray*}
\hskip 1cm V_m(n)=\mathrm{Card}\big(\big\{(x_1,,...,x_m)\in \mathbb{N}^m: 1\leqslant x_j\leqslant n,\, \textrm{$x_1,...,x_m$~are coprime}\big\}\big).
\end{eqnarray*}
It was shown by Lehmer \cite{leh} in 1900 that $\lim\limits_{n\to\infty}\frac{V_m(n)}{n^m}=\frac{1}{\zeta(m)}$. See also \cite{nym}.

More generally, one can consider $w$-coprime integers defined as follows.

\begin{definition}
Let $w\geqslant 1$ be an integer. If $x_1,...,x_m\in\mathbb{N}\setminus\{0\}$ do not have any  common factor of the form $q^w$ with $q\geqslant 2$ a prime number, then we say that
$x_1,...,x_m$ are $w$-coprime.
\end{definition}

Let $V_m^w(n)$ be the number of points $(x_1,,...,x_m)\in \mathbb{N}^m$ such that $x_1,...,x_m$ are $w$-coprime and $1\leqslant x_j\leqslant n\, (1\leqslant j\leqslant m)$.
For $wm\geqslant 2$, Benkoski \cite{ben}  showed in 1976 that $\lim\limits_{n\to\infty}\frac{V_m^w(n)}{n^m}=\frac{1}{\zeta(wm)}$.
It is worth pointing out that $V_1^2(n)$ is the number of squarefree integers among $1,2,...,n$, and that $x_1,...,x_m\in\mathbb{N}\setminus\{0\}$ are coprime if and only if they are $1$-coprime. Some further studies on this subject can be found, for instance, in \cite{hu,ah1,toth}.

As a natural generalization, one can consider $w$-coprime algebraic integers.
Let~$\mathbb{K}$ be a finite extension of $\mathbb{Q}$ with $d=[\mathbb{K}:\mathbb{Q}]$.
Let $\mathscr{O}_{\mathbb{K}}$ be the ring of algebraic integers of $\mathbb{K}$,
and ${\bf E}=\{{\bf e_i}\}_{i=1}^d$ a $\mathbb{Z}$-basis for $\mathscr{O}_{\mathbb{K}}$. For all integers $B>0$, define
\begin{equation*}
    \mathscr{O}_{\mathbb{K}}(B,{\bf E})=\left\{\sum_{i=1}^d{n_i{\bf e_i}}\colon n_j\in [-B,B)\cap \mathbb{Z}\right\}.
\end{equation*}

\begin{definition}
Let $a_1,\dots,a_m$ be elements in $\mathscr{O}_{\mathbb{K}}$, and let $\langle a_1,\dots,a_m\rangle$ be the ideal generated by
$a_1,\dots,a_m$ in $\mathscr{O}_{\mathbb{K}}$.
\begin{itemize}
\item[{\rm (1)}] If $\langle a_1,...,a_m\rangle=\mathscr{O}_{\mathbb{K}}$,
then we say that $a_1,...,a_m$ are coprime.

\item[{\rm (2)}] Let $w\geqslant 1$ be an integer.
If $\langle a_1,...,a_m\rangle$ is not contained in $\q^w$ for any prime ideal $\q$ in~$\mathscr{O}_{\mathbb{K}}$,
then we say that $a_1,...,a_m$ are $w$-coprime.
\end{itemize}
\end{definition}

The set of all $w$-coprime $m$-tuples $(a_1,\dots,a_m)$  is denoted by $T_w$.
Note that $a_1,\dots,a_m$ are $1$-coprime if and only if they are coprime.
Ferraguti and Micheli \cite{fm} showed in 2016 that
\begin{equation*}
    \lim_{B\to+\infty}\frac{|T_1 \cap\mathscr{O}_{\mathbb{K}}(B,{\bf E})|}{(2B)^{dm}}=\frac{1}{\zeta_{\mathbb{K}}(m)},
\end{equation*}
where $\zeta_{\mathbb{K}}(s)$ is the Dedekind zeta function of $\mathbb{K}$.
Note here that the above limit is independent of the
integral basis ${\bf E}$, and is called  the density of~$T_1$.
Micheli and Schnyder \cite{MS} considered the  density  in  function fields,
and one can compare their results with ours (see Remark \ref{remark5} below).

 Likewise one can consider $w$-coprime integral ideals instead of elements.

\begin{definition} Let $\mathfrak{a}_1,\dots,\mathfrak{a}_m$ be integral  ideals in $\mathscr{O}_{\mathbb{K}}$, and let $\langle \mathfrak{a}_1,\dots,\mathfrak{a}_m\rangle$ be the ideal generated by $\mathfrak{a}_1,\dots,\mathfrak{a}_m$ in $\mathscr{O}_{\mathbb{K}}$.
\begin{itemize}
\item[{\rm (1)}] If $\langle\mathfrak{a}_1,\dots,\mathfrak{a}_m\rangle=\mathscr{O}_{\mathbb{K}}$,
then we say that $\mathfrak{a}_1,\dots,\mathfrak{a}_m$ are coprime.

\item[{\rm (2)}] Let $w\geqslant 1$ be an integer.
If $\langle \mathfrak{a}_1,\dots,\mathfrak{a}_m\rangle$ is not contained in $\q^w$ for any prime ideal $\q$ in   $\mathscr{O}_{\mathbb{K}}$,
then we say that $\mathfrak{a}_1,\dots,\mathfrak{a}_m$ are $w$-coprime.
\end{itemize}
\end{definition}

 For all $x>0$,
 let $V(x)$ denote the number of integral ideals in $\mathscr{O}_{\mathbb{K}}$ with norm $\leqslant x$, and let $Q_m^w(x)$ denote the number of $m$-tuples $(\mathfrak{a}_1,\dots,\mathfrak{a}_m)$ of $w$-coprime integral ideals in
 $\mathscr{O}_{\mathbb{K}}$ with norm $\mathrm{N}(\mathfrak{a}_j)\leqslant x\ (1\leqslant j\leqslant m)$.
 Sittinger \cite{sittinger} showed in 2010 that
 $$
 \lim\limits_{x\to +\infty}\frac{Q_m^w(x)}{(V(x))^m}=\frac{1}{\zeta_{\mathbb{K}}(mw)}.
 $$
 Takeda \cite{takeda} proved that when assuming the Extended Lindel\"{o}f Hypothesis (ELH),
 the following asymptotic formula holds,
 \begin{equation}\label{eq 1}
     Q_m^1 (x)=\frac{c_{\mathbb{K}}x^m}{\zeta_{\mathbb{K}}(m)}+O(x^{m-1/2+\varepsilon}),
 \end{equation}
 where $c_{\mathbb{K}}>0$ is  a constant depending only on  $\mathbb{K}$. Later, Takeda \cite{takeda2} obtained the asymtotic formula without assuming the ELH, but the error term is worse than before.
 For other results of the same type over number fields, see for example \cite{sittinger2,tk,sd}.
 Similar problems have also been studied over the polynomial ring $\mathbb{F}_q[X]$, see for example \cite{md,ghl,ah2,lieb,sc}.

It is worth pointing out that all these results are related to zeta values.
Fujita and Yoshida \cite{fy} gave some applications of the zeta distribution (defined by the zeta values)
to number theory, especially several probabilistic proofs of the equality between the Riemann zeta function and some number-theoretic functions.

Inspired by all the above results, in the present work we shall proceed under the setting of general algebraic function fields in positive characteristic,
and exhibit results with relatively better error terms estimates.

We begin with some definitions and notation.

Fix a prime number $p$, and let $q=p^{\kappa}$, where $\kappa\geqslant 1$ is an integer.
Let $K$ be a fixed algebraic function field over $\mathbb{F}_q$, i.e.,
$K$ is a finite extension of some rational function field
$\mathbb{F}_q(X)$.
We also suppose that $\mathbb{F}_q$ is algebraically closed in $K$, and say that  $\mathbb{F}_q$ is the field of constants of $K$.
Then we have $K\cap\overline{\mathbb{F}}_q=\mathbb{F}_q$,
where $\overline{\mathbb{F}}_q$ is the algebraic closure of $\mathbb{F}_q$.

Let $M_K$ be the set of all prime divisors over $K$. We denote by $\mathrm{Div}_K$ the group of divisors over $K$,
which is just the free abelian group generated by $M_K$, i.e.,
\begin{equation*}
\mathrm{Div}_K=\bigg\{\sum_{\p\in M_K}c_{\p}\p: c_{\p}\in\mathbb{Z}, \text{and $c_{\p}=0$ except for a finite number of  $\p$'s}~\bigg\}.
\end{equation*}

For all $D=\sum\limits_{\p\in M_K}c_{\p}\p\in \mathrm{Div}_K$,
the degree and the norm of $D$ are defined by
\begin{eqnarray}
\deg D=\sum\limits_{\p\in M_K}c_{\p}\deg\p,\ \mathrm{and}\ \mathrm{N}(D)=q^{\deg D}.\label{2}
\end{eqnarray}
If for all $\p\in M_K$, we have $c_{\p}\geqslant 0$, then we say that $D$ is effective, and write $D\geqslant 0$. More generally, if $D_1,D_2\in \mathrm{Div}_K$ are such that $D_1-D_2\geqslant 0$,
then we write $D_1\geqslant D_2$.

Fix an integer $r\geqslant 0$, and let $S=\{\p_1,\p_2,...,\p_{r+1}\}$
be a subset of $M_K$. Define the ring of $S$-integers of $K$ by
\begin{equation*}
\co_S=\big\{a\in K: v_{\p}(a)\geqslant 0,\ \textrm{for all $\p\notin S$} \big\},
\end{equation*}
where $v_{\p}$ is the normalized valuation associated with $\p$.

\begin{definition}
Let $a_1,...,a_m$ be elements in $\co_S$, and let $\langle a_1,...,a_m\rangle_S$ be the ideal generated by $a_1,...,a_m$ in~$\co_S$.
\begin{itemize}
\item[{\rm (1)}] If $\langle a_1,...,a_m\rangle_S=\co_S$,
then we say that $a_1,...,a_m$ are coprime.

\item[{\rm (2)}] Let $w\geqslant 1$ be an integer.
If $\langle a_1,...,a_m\rangle_S$ is not contained in $\q^w$ for any prime ideal $\q$ in~$\co_S$,
then we say that $a_1,...,a_m$ are $w$-coprime.
\end{itemize}
As in the canonical case, $a_1,...,a_m$ are coprime if and only if they are $1$-coprime.
\end{definition}

Let $D\in \mathrm{Div}_K$. We say that $D$ is a {\em principal divisor} if $D=\sum\limits_{\p\in M_K}v_{\p}(a)\p\in \mathrm{Div}_K$,
for some $a\in K^{\times}$, and we denote $(a):=\sum\limits_{\p\in M_K}v_{\p}(a)\p\in \mathrm{Div}_K$.
By convention, we put $(0)=+\infty$, and write $(0)\geqslant D$, for all $D\in \mathrm{Div}_K$.

Let $\vec{N}=(N_1,...,N_{r+1})\in\mathbb{Z}^{r+1}$ be such that $N:=\sum\limits_{j=1}^{r+1}N_j\deg\p_j>0$. Put
\begin{eqnarray*}
D(\vec N)&=&\sum\limits_{j=1}^{r+1}N_j\p_j,\nonumber\\
L\big(D(\vec N)\big)&=&\big\{a\in K: (a)+D(\vec N)\geqslant 0\big\}\nonumber\\
&=&\big\{a\in\co_S: v_{\p_j}(a)\geqslant -N_j~(1\leqslant j\leqslant r+1)\big\},\\
V_m^w(\vec{N})&=&\mathrm{Card}\big(\big\{(a_1,...,a_m)\in \big(L(D(\vec N))\big)^m: \text{$a_1,...,a_m$ are $w$-coprime}\big\}\big).
\end{eqnarray*}
Note that the last one can be also reformulated as
$$
V_m^w(\vec{N})=\mathrm{Card}\left(\left\{(a_1,...,a_m)\in \co_S^m: \begin{array}{c}
              \textrm{$a_1,...,a_m$ are $w$-coprime, $v_{\mathfrak{p}_j}(a_i)\geqslant -N_j$} \\
              (1\leqslant i\leqslant m,\, 1\leqslant j\leqslant r+1)
              \end{array}\right\}\right).
$$

With the above definitions and notation, we have the following result.

\begin{theorem}\label{thm1}
 Let $\vec{N}=(N_1,...,N_{r+1})\in\mathbb{Z}^{r+1}$ be such that $N:=\sum\limits_{j=1}^{r+1}N_j\deg\p_j>0$.
Then for all integers $w,m\geqslant 1$ with $wm\geqslant 2$, we have
$$
V_m^w(\vec{N})=\frac{q^{m(N+1-g_K)}}{\zeta_S(wm)}+O_K(q^{N/w}),
$$
where the constant involved in $O_K$ only depends on $K$ and can be determined\linebreak explicitly,
 $g_K$ is the genus of $K$, and $\zeta_S$ is the $S$-zeta function defined by
$$
\zeta_S(s)=\prod_{\mathfrak{p}\in M_K\setminus S}(1-q^{-s\deg\mathfrak{p}})^{-1}\quad (\mathrm{Re}(s)>1).
$$
\end{theorem}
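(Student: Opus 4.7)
The plan is to apply Möbius inversion over the integral ideals of $\co_S$, evaluate the resulting Riemann--Roch spaces, and recognize the completed Möbius sum as $1/\zeta_S(mw)$. Concretely, for any non-zero tuple $(a_1,\dots,a_m)\in L(D(\vec N))^m$ one has the identity
$$\mathbf{1}\bigl[(a_1,\dots,a_m)\text{ is }w\text{-coprime}\bigr] = \sum_{\text{squarefree }\mathfrak{a}} \mu(\mathfrak{a})\, \mathbf{1}\bigl[a_i\in\mathfrak{a}^w\text{ for all }i\bigr],$$
obtained by expanding $\prod_{\q}(1-\mathbf{1}[a_i\in\q^w\text{ for all }i])$; the sum is finite because $\langle a_1,\dots,a_m\rangle_S$ has only finitely many squarefree ideal divisors. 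Since $wm\geqslant 2$, the zero tuple is not $w$-coprime and contributes $0$. Interchanging the order of summation, I arrive at
$$V_m^w(\vec N) = \sum_{\mathfrak{a}}\mu(\mathfrak{a})\bigl(|L(D(\vec N)-wA)|^m - 1\bigr),$$
where $A$ is the effective divisor attached to $\mathfrak{a}$ (supported in $M_K\setminus S$), and the outer sum is effectively finite since $L(D(\vec N)-wA)=\{0\}$ whenever $w\deg A>N$.

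For the main term, I invoke the Riemann--Roch theorem: whenever $w\deg A\leqslant N-2g_K+1$, one has $\deg(D(\vec N)-wA)>2g_K-2$, so $|L(D(\vec N)-wA)|=q^{N-w\deg A+1-g_K}$ exactly. Factoring out the prefactor $q^{m(N+1-g_K)}$, completing the Möbius sum to run over all integral ideals of $\co_S$, and using the Euler-product identity
$$\frac{1}{\zeta_S(mw)} = \sum_{\mathfrak{a}}\mu(\mathfrak{a})\,q^{-mw\deg A}$$
produces the claimed main term $q^{m(N+1-g_K)}/\zeta_S(mw)$.

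The residual splits into three pieces: (i) the Möbius tail $q^{m(N+1-g_K)}\sum_{\deg A>(N-2g_K+1)/w}\mu(\mathfrak{a})\,q^{-mw\deg A}$; (ii) the correction $-\sum_{\deg A\leqslant(N-2g_K+1)/w}\mu(\mathfrak{a})$ arising from the $-1$ in the Möbius expansion; and (iii) the middle range $(N-2g_K+1)/w<\deg A\leqslant N/w$, in which Riemann--Roch gives only the inequality $|L(D(\vec N)-wA)|\leqslant q^{N-w\deg A+1}$. Each piece is controlled by the Weil-type bound $\#\{\mathfrak{a}\subseteq\co_S:\deg A=n\}\ll_K q^n$, which follows from the rationality of $\zeta_S$ together with the Riemann hypothesis for function fields (the Weil theorem). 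Summing the resulting geometric series, all of which have ratio $q^{1-mw}<1$ thanks to the hypothesis $mw\geqslant 2$, produces $O_K(q^{N/w})$ in every case.

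The main obstacle I anticipate is verifying that all three error pieces simultaneously reach exactly the exponent $N/w$, not a worse one, and tracking the implied constants explicitly in terms of $K$. The threshold $\deg A\approx N/w$ is where the ideal-count growth $q^n$ first overpowers the Riemann--Roch space size $q^{m(N-wn)}$, and this balance explains the optimality of the exponent $N/w$. The boundary case $mw=2$ is the most delicate, since the controlling geometric series $\sum q^{n(1-mw)}=\sum q^{-n}$ decays only at unit rate; the constants in the Weil estimate for the ideal counting function of $\co_S$ must therefore be tracked carefully in terms of $g_K$ and the degrees $\deg\p_j$ ($1\leqslant j\leqslant r+1$).
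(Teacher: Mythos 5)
Your proposal is correct and follows essentially the same route as the paper's proof: M\"obius inversion over squarefree ideals of $\co_S$ (the paper's inclusion--exclusion over squarefree effective divisors in $\mathrm{Div}_S$), Riemann--Roch to evaluate $|L(D(\vec N)-wA)|$ exactly when $w\deg A\leqslant N-2g_K+1$ and to bound it by $O_K(1)$ in the middle range, and the divisor-count bound $b_{S,n}\ll_K q^{n}$ to control the tail, the $-1$ correction, and the middle range, each contributing $O_K(q^{N/w})$. Two harmless slips worth noting: the zero tuple fails to be $w$-coprime for every $w,m\geqslant 1$ (the hypothesis $wm\geqslant 2$ is needed only for the absolute convergence of $\sum_{\mathfrak{a}}\mu(\mathfrak{a})q^{-mw\deg A}$), and the bound $b_{S,n}\ll_K q^{n}$ requires only Riemann--Roch and the finiteness of $h_K$, not the Weil theorem.
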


\begin{remark} {\rm (1)} Note that $s=1$ is a pole of $\zeta_S$, hence Theorem \ref{thm1} fails in the case
that $w=m=1$, which corresponds to the counting in $\co_S^{\times}$ (the unit group of $\co_S$).
Put $d_j=\deg\p_j$ and $n_j=v_{\p_j}(a)\ (1\leqslant j\leqslant r+1)$ with $a\in \co_S^{\times}$.
Since $\co_S^{\times}\simeq\mathbb{Z}^r\times \mathbb{F}_q^{\times}$ (see Proposition 14.2 in \cite[p.\,243]{rosen}),
then we have
$$
V_1^1(\vec{N})=(q-1)\mathrm{Card}\Big(\Big\{(n_j)_{1\leqslant j\leqslant r+1}\in \mathbb{Z}^{r+1}:
n_j\geqslant -N_j,\ \sum_{k=1}^{r+1}d_k n_k=0\Big\}\Big).
$$
By Theorem 5.1 in \cite[p.\,70]{lang}, one can show
$$
V_1^1(\vec{N})=\frac{q-1}{\kappa_S}\frac{\gcd(d_1,\ldots, d_{r+1})}{\prod\limits_{j=1}^{r+1}d_j}\frac{N^r}{r!}+O(N^{r-1}).
$$
Here $\kappa_S=\mathrm{Card}\big(\mathrm{Div}^0(S)/\mathscr{P}(S)\big)$ is finite (see
\cite[p.\,243, Proposition 14.1]{rosen}),
$\mathrm{Div}(S)$ is the subgroup of $\mathrm{Div}_K$ generated by $S$, and
\begin{eqnarray*}
\mathrm{Div}^0(S)&=&\{D\in \mathrm{Div}(S): \deg D=0\},\\
\mathscr{P}(S)&=&\{D\in \mathrm{Div}(S): \textrm{$D$ is principal}\}.
\end{eqnarray*}
If we change the manner to count $\co_S^{\times}$, we can obtain a much simple formula.
Note that by taking the generators of $\co_S^{\times}\simeq\mathbb{Z}^r\times \mathbb{F}_q^{\times}$,
each $a\in \co_S^{\times}$ can be associated with a unique element $(m_1(a),\ldots, m_r(a), \eta(a))\in \mathbb{Z}^r\times \mathbb{F}_q^{\times}$. For all integers $k\geqslant 1$, put
$$
V(k)=\mathrm{Card}\big(\big\{a\in \co_S^{\times}: |m_j(a)|\leqslant k\ (1\leqslant j\leqslant r)\big\}\big).
$$
Then we have $V(k)=(2k+1)^r(q-1)$.

{\rm (2)} The case that $m=1$ and $w=2$
can also be treated by a different argument with better error estimate (see \cite{liuyao}).
\end{remark}

In a similar manner as in the real setting, one can extend the notion of $w$-coprime to integral ideals in $\co_S$.

\begin{definition} Let $\mathfrak{a}_1,\dots,\mathfrak{a}_m$ be integral  ideals in $\co_S$, and let $\langle \mathfrak{a}_1,\dots,\mathfrak{a}_m\rangle_S$ be the ideal generated by $\mathfrak{a}_1,\dots,\mathfrak{a}_m$ in $\co_S$.
\begin{itemize}
\item[{\rm (1)}] If $\langle\mathfrak{a}_1,\dots,\mathfrak{a}_m\rangle_S=\co_S$,
then we say that $\mathfrak{a}_1,\dots,\mathfrak{a}_m$ are coprime.

\item[{\rm (2)}] Let $w\geqslant 1$ be an integer. If $\langle \mathfrak{a}_1,\dots,\mathfrak{a}_m\rangle_S$ is not contained in $\q^w$ for any prime ideal $\q$ in $\co_S$, then we say that $\mathfrak{a}_1,\dots,\mathfrak{a}_m$ are $w$-coprime.
\end{itemize}
\end{definition}

For all integers $n\geqslant 1$, we denote by
$Q_{S,m}^w(n)$ the number of $m$-tuples $(\mathfrak{a}_1,\dots,\mathfrak{a}_m)$ of $w$-coprime integral  ideals in $\co_S$ with $\deg\mathfrak{a}_j\leqslant n$ ($1\leqslant j\leqslant m)$.
\vskip 5pt

With the above notation and definitions, we have the following result which is an ideal analogue of Theorem \ref{thm1}.

\begin{theorem}\label{thm2}
Let $w,m\geqslant 1$ be integers such that $wm\geqslant 2$. Then
\begin{eqnarray*}
Q_{S,m}^w(n)=\frac{c_{K,S}^mq^{mn}}{\zeta_S(wm)}+
\begin{cases}
O_{K,S}(q^{n/w}),       & \text{if}\quad m=1,\\
O_{K,S}(nq^{n}),                & \text{if}\quad m=2,w=1,\\
O_{K,S}(q^{n(m-1)}),                        &\text{otherwise}.
\end{cases}
\end{eqnarray*}
Here the constant $c_{K,S}$ is defined by
\begin{eqnarray}
c_{K,S}=\frac{h_Kq^{2-g_K}}{(q-1)^2}\prod\limits_{j=1}^{r+1}\big(1-q^{-\deg\p_j}\big),\label{cks}
\end{eqnarray}
and $h_K$ is the class number of $K$.
\end{theorem}

\begin{remark} For the same reason as in Theorem \ref{thm1}, the above result fails in the case
that $w=m=1$, which corresponds to the integral ideal $\co_S$, and thus $Q_{S,1}^1(n)=1$.
\end{remark}

\subsection{Some preliminary results}

 In this section, we shall present some preparatory results. We begin with the inclusion-exclusion principle in positive characteristic
 which is an exact analog for the classical result (see for example \cite[p.\,39]{tenenbaum}).

Let $\mathrm{Div}_S$ be the subgroup generated by $M_K\setminus S$ in $\mathrm{Div}_K$.
For all divisors
$$
D=\sum\limits_{\p\in M_K\setminus S}c_{\p}\p\in \mathrm{Div}_S,
$$
put $\supp D=\big\{\p:c_{\p}\neq 0\big\}$, and we say that $D$ is squarefree if for all
$\p\in \supp D$, we have $c_{\p}=1$. Finally we define
\begin{eqnarray*}
\mu_S(D)=\left\{\begin{array}{cl}
1, &\textrm{if $D=0$},\\
(-1)^t, &\textrm{if $t=\mathrm{Card}\big(\supp D\big)$ and $D$ is squarefree},\\
0, &\textrm{otherwise},
\end{array}
\right.
\end{eqnarray*}
and call $\mu_S$ the M\"{o}bius function over $\mathrm{Div}_S$. Note that $\mu_S$ is multiplicative, i.e.,
if $D_1,D_2\in\mathrm{Div}_S$ are such that  $\supp D_1\cap\, \supp D_2=\emptyset$, then
$$
\mu_S(D_1+D_2)=\mu_S(D_1)\mu_S(D_2).
$$
Moreover for all $D\in\mathrm{Div}_S$, we define
$$
\delta(D)=\left\{\begin{array}{cl}
1, &\textrm{if}\ D=0,\\
0, &\textrm{otherwise}.
\end{array}\right.
$$
Then for all effective divisors $D$, we have the inverse M\"obius formula
$$
\sum\limits_{0\leqslant D'\leqslant D}\mu_S(D')=\delta(D).
$$
The equality can be proved as in the classical case by the multiplicativity of $\mu_S$
 and the fact $\sum\limits_{j=0}^n\mu_S(j\p)=\delta (n\p)$, for all $\p\in M_K\setminus S$ and for all integers $n\geqslant 0$.

Let $\mathscr{A}$ be a finite set, and let $\mathscr{P}=\{P_j:1\leqslant j\leqslant k\}$ be a set of properties, i.e., each $P_j$ is a subset of $\mathscr{A}$ and the fact that $a\in\mathscr{A}$ satisfies the property $P_j$ means that $a\in P_j$. For each subset $I$ of $\mathscr{P}$, denote by $\mathscr{A}(I)$ the number of all elements $a\in\mathscr{A}$ which satisfy all the properties in $I$, i.e.,
$$
\mathscr{A}(I)=\mathrm{Card}\Big(\bigcap_{P\in I} P\Big).
$$
In particular, we have $A(\emptyset)=\mathrm{Card}(\mathscr{A})$. Let $S(\mathscr{A},\mathscr{P})$ be the number of all elements $a\in \mathscr{A}$ which do not satisfy any property in~$\mathscr{P}$, i.e.,
$$
S(\mathscr{A},\mathscr{P})=\mathrm{Card}\Big(\mathscr{A}\setminus \Big(\bigcup_{j=1}^kP_j\Big)\Big).
$$
 Finally we fix $k$ distinct prime divisors $\q_1,\q_2,\ldots, \q_k$ in $M_K\setminus S$,
 and we associate each property $P_j$ with $\q_j\ (1\leqslant j\leqslant k)$.

\begin{lemma}\label{lem1}
With the above notation, we have
$$
S(\mathscr{A},\mathscr{P})=\sum_{I\subseteq \mathscr{P}}\mu_S(D_I)\mathscr{A}(I),
$$
where for all $I\subseteq \mathscr{P}$, we put $D_I:=\sum\limits_{P_j\in I}\q_j$.
\end{lemma}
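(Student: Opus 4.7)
The plan is to translate the classical proof of the inclusion–exclusion principle into the divisor/Möbius language set up in the paragraph preceding the lemma, using the inverse Möbius formula $\sum_{0\leqslant D'\leqslant D}\mu_S(D')=\delta(D)$ as the key identity.

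First, I would encode, for each element $a\in\mathscr{A}$, the set of properties it satisfies as a squarefree divisor. Precisely, define
$$
D(a)=\sum_{\substack{1\leqslant j\leqslant k\\ a\text{ satisfies }P_j}}\q_j\in\mathrm{Div}_S,
$$
which is effective, squarefree, and supported in $\{\q_1,\ldots,\q_k\}\subseteq M_K\setminus S$. By definition $a$ is counted by $S(\mathscr{A},\mathscr{P})$ exactly when $D(a)=0$, i.e., exactly when $\delta(D(a))=1$. Hence
$$
S(\mathscr{A},\mathscr{P})=\sum_{a\in\mathscr{A}}\delta(D(a)).
$$

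Next, I would apply the inverse Möbius formula recalled just above the lemma to each $D(a)$, obtaining
$$
S(\mathscr{A},\mathscr{P})=\sum_{a\in\mathscr{A}}\sum_{0\leqslant D'\leqslant D(a)}\mu_S(D').
$$
Because $D(a)$ is squarefree and supported in the distinct primes $\q_1,\ldots,\q_k$, the effective divisors $D'$ with $0\leqslant D'\leqslant D(a)$ are in bijection with the subsets $I\subseteq\{P_j:a\text{ satisfies }P_j\}$ via $I\mapsto D_I=\sum_{P_j\in I}\q_j$. This is the one place where one must check that nothing subtle happens: the condition $\q_j\in M_K\setminus S$ guarantees $D_I\in\mathrm{Div}_S$, and distinctness of the $\q_j$ ensures squarefreeness so that $\mu_S(D_I)=(-1)^{|I|}$ is well-defined.

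Finally, I would swap the order of summation: the condition "$I\subseteq\{P_j:a\text{ satisfies }P_j\}$" is equivalent to "$a$ satisfies every property in $I$", so grouping the inner sum by $I\subseteq\mathscr{P}$ gives
$$
S(\mathscr{A},\mathscr{P})=\sum_{I\subseteq\mathscr{P}}\mu_S(D_I)\sum_{\substack{a\in\mathscr{A}\\ a\text{ satisfies all }P_j\in I}}1=\sum_{I\subseteq\mathscr{P}}\mu_S(D_I)A(I),
$$
which is the desired formula. I do not foresee a genuine obstacle: the argument is a routine inclusion–exclusion, and the only care needed is the bookkeeping that identifies subsets of $\mathscr{P}$ with effective squarefree subdivisors of $D(a)$ so that the inverse Möbius formula can be applied term by term.
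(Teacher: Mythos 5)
Your proposal is correct and follows essentially the same route as the paper: your $D(a)$ is exactly the paper's $F(a)$, and both arguments write $S(\mathscr{A},\mathscr{P})=\sum_{a}\delta(F(a))$, expand $\delta$ via the inverse M\"obius formula, and swap the order of summation, identifying the effective squarefree subdivisors with subsets $I\subseteq\mathscr{P}$. No gaps.
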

\begin{proof} Put $P=\sum\limits_{j=1}^k\q_j$.
For all $a\in \mathscr{A}$, define
$$
F(a)=\sum\limits_{a\in P_j,\atop 1\leqslant j\leqslant k} \q_j.
$$
Then $0\leqslant F(a)\leqslant P$.
Recall that $a\in P_j$ means that $a$ satisfies the property $P_j$. Hence
$F(a)=0$ if and only if $a$ does not satisfy any property  $P_j\ (1\leqslant j\leqslant k)$, i.e., $a$ is counted for $S(\mathscr{A},\mathscr{P})$. Thus
\begin{eqnarray*}
S(\mathscr{A},\mathscr{P})=\sum_{a\in \mathscr{A}}\delta(F(a))=\sum_{a\in \mathscr{A}}\sum_{0\leqslant D\leqslant F(a)}\mu_S(D)
=\sum_{0\leqslant D\leqslant P}\mu_S(D)\sum_{a\in \mathscr{A},\atop D\leqslant F(a)}1.
\end{eqnarray*}
Note that for all effective divisors $D$, we have $D\leqslant F(a)$ if and only if
$$
\mathrm{Supp}D\subseteq \mathrm{Supp}F(a)\subseteq \mathrm{Supp}P,\ \textrm{and $D$ is squarefree},
$$
i.e., $D=D_I$ for some $I\subseteq \mathscr{P}$, from which we deduce directly
\begin{eqnarray*}
S(\mathscr{A},\mathscr{P})&=&\sum_{I\subseteq \mathscr{P}}\mu_S(D_I)\sum_{a\in \mathscr{A},\atop a\in P_j\in I}1=\sum_{I\subseteq \mathscr{P}}\mu_S(D_I)\mathscr{A}(I).
\end{eqnarray*}
Hence the desired result holds.
\end{proof}

\begin{remark}
By Theorem 14.5 in \cite[p.\,249]{rosen}, we can identify the nonzero prime ideals in~$\co_S$ with the prime divisors in $M_K\setminus S$, and for such a prime ideal (resp. prime divisor)~$\p$,
we have the same localization  $\mathscr{O}_{S,\mathfrak{p}}=\mathscr{O}_{\mathfrak{p}}$ (see \cite[pp.\,247--248]{rosen}).
Then a nonzero integral ideal $\m$ in $\co_S$ can be identified with the following divisor
\begin{equation}\label{eq:3}
D(\m)=\sum\limits_{\p\in M_K\setminus S}m_{\p}\p.
\end{equation}
Here $m_{\p}\geqslant 0$ is the integer (noted $v_{\p}(\m)$) determined by $\m\co_{\p}=\pi^{m_{\p}}_{\p}\co_{\p}$,
and $\pi_{\p}$ is a prime element in $\co_{\p}$.
The norm of $\m$ is $\mathrm{N}(\m)=\mathrm{Card}\big(\co_S/\m\big)=q^{\deg\m}$,
and the degree $\deg\m$ of $\m$ satisfies
\begin{eqnarray}\label{eq:4}
\deg\m=\deg D(\m)=\sum\limits_{\p\in M_K\setminus S} m_{\p}\deg \p.
\end{eqnarray}
In particular, we have $\deg\co_S=0$.
By convention, the degree of the zero ideal is $+\infty$.
Finally we define $\mu_S(\m):=\mu_S(D(\m))$. Then we have
\begin{eqnarray*}
\mu_S(\m) = \left\{\begin{array}{cl}
    1,                  &   \text{if } \m=\mathscr{O}_S, \\
    (-1)^t,           &\text{if } \m = \mathfrak{n}_1\cdots\mathfrak{n}_t \text{ with } \mathfrak{n}_1,\dots,\mathfrak{n}_t \text{ distinct prime ideals in $\mathscr{O}_S$}, \\
    0,              & \text{otherwise},
\end{array}\right.
\end{eqnarray*}
and we call $\mu_S$ the M\"{o}bius function over $\co_S$. By the same argument, one can see that the inclusion-exclusion principle also holds for this new $\mu_S$, namely with the notation before Lemma \ref{lem1}, we have
\begin{eqnarray}\label{eq5}
S(\mathscr{A},\mathscr{P})=\sum_{I\subseteq \mathscr{P}}\mu_S(\m_I)\mathscr{A}(I).
\end{eqnarray}
Here $\m_I=\prod\limits_{P_j\in I}\q_j$, for all $I\subseteq \mathscr{P}$, and $\q_1,\ldots, \q_k$ are distinct prime ideals in $\mathscr{O}_S$.
\end{remark}

 For all $D\in \mathrm{Div}_K$, define
$$
L(D)=\big\{a\in K^{\times}: (a)+D\geqslant 0\big\}\cup\{0\}.
$$
Then $L(D)$ is an $\mathbb{F}_q$-vector space with finite dimension, and we denote its dimension by $\ell(D)$.
The famous Riemann-Roch theorem can be reformulated as follows (see for example \cite[Theorem 5.4, p.\,49]{rosen}).

 \begin{lemma}\label{lem2} Let $\mathscr{C}\subset \mathrm{Div}_K$ be the class of canonical divisors. Then there exists an integer $g_K\geqslant 0$ (called the genus of $K$) such that for all $C\in\mathscr{C}$ and for all $D\in \mathrm{Div}_K$, we have
 $$
 \ell(D)=\deg D-g_K+1+\ell(C-D).
 $$
In particular, if $\deg D\geqslant 2g_K-1$, then $\ell(D)=\deg D-g_K+1$.
\end{lemma}

We need also the famous Weil theorem (see for example \cite[pp.\,53-55]{rosen}).

\begin{lemma}\label{lem3} Let $g_K$ be the genus of $K$, and let
$\zeta_K(s)=\prod\limits_{\p\in M_K}\big(1-q^{-s\deg\p}\big)^{-1}$ be the zeta function over $K$. Then we have
$$
\zeta_K(s)=\frac{P_K(q^{-s})}{(1-q^{-s})(1-q^{1-s})}.
$$
Here $P_K(u)$ is a polynomial with integer coefficients such that
$$
P_K(u)=\prod_{j=1}^{2g_K}(1-\pi_ju),
$$
and $\pi_j\in\mathbb{C}$ are such that $|\pi_j|=q^{1/2}\ (1\leqslant j\leqslant 2g_K)$. In particular,
we have
$$
q^{g_K}P_K(q^{-1})=P_K(1)=h_K,
$$
where $h_K$ is
 the class number of $K$.
\end{lemma}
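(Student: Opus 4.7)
The plan is to prove Lemma \ref{lem3} in three stages: first establish the rational form of $\zeta_K$ from Riemann-Roch; then derive the functional equation and the class number identity $P_K(1)=h_K$; finally verify the Riemann hypothesis $|\pi_j|=q^{1/2}$, which is the deep part.

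First I would expand $\zeta_K(s)=\sum_{D\geqslant 0}q^{-s\deg D}$ as a power series in $u=q^{-s}$, namely $\sum_{n\geqslant 0}A_n u^n$ with $A_n$ counting effective divisors of degree $n$. Grouping by linear equivalence class, the number of effective divisors in a class $c\in\pic(K)$ of degree $n$ equals $(q^{\ell(D)}-1)/(q-1)$ for any $D\in c$, since $L(D)\setminus\{0\}$ surjects onto that set with fibres of size $q-1$. By Lemma \ref{lem2}, for $n\geqslant 2g_K-1$ this is $(q^{n-g_K+1}-1)/(q-1)$, independent of $c$. Since the number of classes of each attained degree is $h_K$, one checks (via Riemann-Roch) that every sufficiently large degree is attained, splits the sum at $n=2g_K-1$, and evaluates the tail as two geometric series. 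The result is $\zeta_K(s)=P_K(u)/((1-u)(1-qu))$ with $P_K\in\mathbb{Z}[u]$ of degree at most $2g_K$.

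Next I would apply Riemann-Roch symmetrically, pairing $D$ with $C-D$ for a canonical $C$, to obtain a functional equation which translates to $P_K(u)=q^{g_K}u^{2g_K}P_K(1/(qu))$. This forces $\deg P_K=2g_K$ and pairs the inverse roots as $\pi_j\leftrightarrow q/\pi_j$, giving the factorization $P_K(u)=\prod_{j=1}^{2g_K}(1-\pi_j u)$. The identity $q^{g_K}P_K(q^{-1})=P_K(1)$ is immediate from the functional equation evaluated at $u=q^{-1}$, and $P_K(1)=h_K$ follows by computing the residue of $\zeta_K$ at $s=1$ in two ways: from the rational form, and from the class-number decomposition described in the first step.

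The main obstacle is the Riemann hypothesis $|\pi_j|=q^{1/2}$. Let $C$ be the smooth projective model of $K$ and $N_n=\#C(\mathbb{F}_{q^n})$; the logarithmic derivative of $\zeta_K$ gives $N_n=q^n+1-\sum_j\pi_j^n$, so the claim is equivalent to $|N_n-q^n-1|\leqslant 2g_Kq^{n/2}$ for all $n\geqslant 1$. I would follow the elementary Stepanov-Bombieri approach: construct a nonzero rational function on $C$ with prescribed high-order vanishing along the graph of the $q$-th power Frobenius and a controlled pole divisor, then use a zero-counting argument to obtain $N_n\leqslant q^n+(2g_K+1)q^{n/2}+O(1)$; applying the same bound after a suitable quadratic constants extension and invoking the functional equation yields the matching lower bound, from which the Weil bound on $|\pi_j|$ is extracted by a standard analytic argument on $\log P_K$. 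The delicate step is the Stepanov construction itself, which requires careful Riemann-Roch estimates to produce an auxiliary function whose pole divisor has small enough degree; Weil's original proof via the Hodge index theorem on $C\times C$ is an alternative that is conceptually cleaner but requires intersection theory on surfaces.
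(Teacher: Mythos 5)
Your outline is correct, but note that the paper does not prove this lemma at all: it is quoted as the classical Weil theorem with a pointer to Rosen, \emph{Number theory in function fields}, pp.\,53--55, and is used as a black box. What you have written is essentially the standard textbook proof of that cited result. The rationality step (grouping effective divisors by linear equivalence class, counting $\big(q^{\ell(D)}-1\big)/(q-1)$ effective divisors per class, splitting at degree $2g_K-1$) and the functional equation $P_K(u)=q^{g_K}u^{2g_K}P_K(1/(qu))$, from which $\deg P_K=2g_K$ and $q^{g_K}P_K(q^{-1})=P_K(1)$ follow, are all right, as is the residue computation giving $P_K(1)=h_K$. One point you gloss over: the assertion that ``every sufficiently large degree is attained'' by a divisor class is F.~K.~Schmidt's theorem that the degree map on $\mathrm{Div}_K$ surjects onto $\mathbb{Z}$; this does not follow from Riemann--Roch alone (Riemann--Roch only shows that a class of degree $\geqslant g_K$ contains an effective divisor), and the usual proof goes through the zeta function of constant field extensions. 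Without it the denominator of $\zeta_K$ could a priori be $(1-u^{\delta})(1-(qu)^{\delta})$ for some $\delta>1$. The Riemann hypothesis part $|\pi_j|=q^{1/2}$ is, as you say, the deep content; your Stepanov--Bombieri sketch names the right ingredients (upper bound for $N_n$ after a constants extension, lower bound via the functional equation, extraction of the root bound from the logarithmic derivative), but it is an outline of a substantial argument rather than a proof. For the purposes of this paper, citing the result as the authors do is the appropriate choice; reproving Weil's theorem is far beyond what the application requires.
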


\begin{lemma}\label{lem4}
Let $n\geqslant 0$ be an integer, and let $j_S(n)$ be the number of integral ideals in~$\co_S$ with degree $\leqslant n$.
Then we have
\begin{equation}
j_S(n)=c_{K,S}q^n+O_{K,S}(1),
\end{equation}
where the constant $c_{K,S}$ is given by the formula (\ref{cks}).
\end{lemma}
\begin{proof} Recall that the $S$-zeta function can be also defined as follows:
$$
\zeta_S(s)=\sum_{\mathfrak{a}}\frac{1}{(\mathrm{N}(\mathfrak{a}))^s}\quad (\mathrm{Re}(s)>1),
$$
where $\mathfrak{a}$ varies over all integral ideals of $\co_S$. For all integers $k\geqslant 0$, denote by $b_k$ (resp. $b_{S,k}$)
the number of effective divisors in $\mathrm{Div}_K$ (resp. $\mathrm{Div}_S$) with degree $k$.
Then $b_{S,k}$ is also the number of integral ideals of degree $k$ in $\mathscr{O}_S$. Thus we have
\begin{eqnarray*}
\zeta_K(s)&=&\prod\limits_{\p\in M_K}\big(1-q^{-s\deg\p}\big)^{-1}=\sum_{k=0}^{\infty}b_{k}q^{-sk},\\
\zeta_S(s)&=&\prod_{\mathfrak{p}\in M_K\setminus S}(1-q^{-s\deg\mathfrak{p}})^{-1}=\sum_{k=0}^{\infty}b_{S,k}q^{-sk},
\end{eqnarray*}
from which we obtain by Lemma \ref{lem3}, with $z=q^{-s}$,
$$
\zeta_S(s)=\zeta_K(s)\prod_{j=1}^{r+1}(1-q^{-s\deg\mathfrak{p}_j})=\frac{P_K(z)}{(1-z)(1-qz)}\prod_{j=1}^{r+1}(1-z^{\deg\mathfrak{p}_j}).
$$
Note that $\zeta_K(s)=\sum\limits_{k=0}^{\infty}b_{k}q^{-sk}$ converges absolutely for $\mathrm{Re}(s)>1$,
and that $0\leqslant b_{S,k}\leqslant b_k$ for all integers $k\geqslant 0$.
Thus $\sum\limits_{k=0}^{\infty}b_{S,k}q^{-sk}$ also converges absolutely for $\mathrm{Re}(s)>1$. Then
\begin{eqnarray*}
j_S(n)&=&\sum_{k=0}^{n}b_{S,k}=\frac{1}{2\pi i}\oint_{|z|=q^{-2}}\Big(\sum\limits_{k=0}^{\infty}b_{S,k}z^{k}\Big)\frac{z^{-n-1}\mathrm{d}z}{1-z}\\
&=&\frac{1}{2\pi i}\oint_{|z|=q^{-2}}\frac{P_K(z)}{(1-z)(1-qz)}\Big(\prod_{j=1}^{r+1}(1-z^{\deg \mathfrak{p}_j})\Big)\frac{z^{-n-1}\mathrm{d}z}{1-z}.
\end{eqnarray*}
Indeed the contour $|z|=q^{-2}$ can be replaced by $|z|=q^{-\alpha}$ with any $\alpha>1$,
since $\zeta_S(s)$ converges absolutely for $\mathrm{Re}(s)>1$.

By moving the integral contour to $|z|=q$, we obtain
\begin{eqnarray*}
j_S(n)&=&-\mathrm{Res}_{z=q^{-1}}\frac{P_K(z)}{(1-z)(1-qz)}\Big(\prod_{j=1}^{r+1}(1-z^{\deg \mathfrak{p}_j})\Big)\frac{z^{-n-1}}{1-z}\nonumber\\
       && -\mathrm{Res}_{z=1}\frac{P_K(z)}{(1-z)(1-qz)}\Big(\prod_{j=1}^{r+1}(1-z^{\deg \mathfrak{p}_j})\Big)\frac{z^{-n-1}}{1-z}\nonumber\\
       && +\frac{1}{2\pi i}\oint_{|z|=q}\frac{P_K(z)}{(1-z)(1-qz)}\Big(\prod_{j=1}^{r+1}(1-z^{\deg \mathfrak{p}_j})\Big)\frac{z^{-n-1}\mathrm{d}z}{1-z}\nonumber\\
&=&\frac{h_Kq^{n-g_K+2}}{(q-1)^2}\prod_{j=1}^{r+1}(1-q^{-\deg \mathfrak{p}_j})+R_1+R_2\\
&=& c_{K,S}q^n+R_1+R_2.
\end{eqnarray*}
Here  $P_K(q^{-1})=h_Kq^{-g_K}$ by Lemma \ref{lem3},  and
\begin{eqnarray*}
R_1&=& -\mathrm{Res}_{z=1}\frac{P_K(z)}{(1-z)(1-qz)}\Big(\prod_{j=1}^{r+1}(1-z^{\deg \mathfrak{p}_j})\Big)\frac{z^{-n-1}}{1-z},\\
R_2&=& \frac{1}{2\pi i}\oint_{|z|=q}\frac{P_K(z)}{(1-z)(1-qz)}\Big(\prod_{j=1}^{r+1}(1-z^{\deg \mathfrak{p}_j})\Big)\frac{z^{-n-1}\mathrm{d}z}{1-z}.
\end{eqnarray*}

For the remainder term $R_1$, we distinguish two different cases.
\vskip 5pt

{\bf Case 1:} $\mathrm{Card}(S)=1$, i.e. $S=\{\mathfrak{p}_1\}$. Then
\begin{equation*}
|R_1|\leqslant \frac{P_K(1)\deg\mathfrak{p}_1}{q-1}=\frac{h_K\deg\mathfrak{p}_1}{q-1}.
\end{equation*}

{\bf Case 2:} $\mathrm{Card}(S)>1$. Then $R_1=0$.
\vskip 5pt

For the remainder term $R_2$, we have
\begin{equation*}
|R_2|\leqslant \frac{P_K(q)}{(q-1)(q^2-1)}\Big(\prod_{j=1}^{r+1}(1+q^{\deg\mathfrak{p}_j})\Big)\frac{q^{-n}}{q-1}=o(1)\ (n\to \infty).
\end{equation*}

In conclusion, the desired result holds.
\end{proof}

\begin{remark} By convention, for all integers $n<0$, we put $j_S(n)=0$.
\end{remark}

\subsection{Proof of Theorem \ref{thm1}}
Let $\mathscr{D}$ be the set of $\p$ in $M_K\setminus S$ such that $\deg \p\leqslant N/w$. Then $\mathscr{D}$ is finite.
Set $\mathscr{A}=\big(L(D(\vec N))\big)^m$. For all $\p\in \mathscr{D}$, define
\begin{eqnarray*}
P_{\p}&=&\big\{(a_1,...,a_m)\in \mathscr{A}: D(\langle a_1,...,a_m\rangle_S)\geqslant w\p\}\\
      &=&\big\{(a_1,...,a_m)\in \mathscr{A}: (a_l)\geqslant w\p\ (1\leqslant l\leqslant m)\}.
\end{eqnarray*}
Put $\mathscr{P}=\{P_{\p}: \p\in\mathscr{D}\}$. For all $\p\in\mathscr{D}$ and all $\mathbf{a}=(a_1,...,a_m)\in \mathscr{A}$, we say that $\mathbf{a}$ satisfies the property $P_{\p}$ if $\mathbf{a}\in P_{\p}$. For all $I\subseteq \mathscr{D}$, put $D_I=\sum\limits_{\p\in I}\p$. Then
\begin{eqnarray*}
\mathscr{A}(I)&=&\mathrm{Card}\Big(\Big\{\mathbf{a}=(a_1,...,a_m)\in \mathscr{A}:\ \textrm{$\mathbf{a}$
satisfies the property $P_{\p}$},\ \forall \p\in I\Big\}\Big)\\
    &=&\mathrm{Card}\Big(\Big\{(a_1,...,a_m)\in \mathscr{A}:\ (a_l)\geqslant w\p\ (1\leqslant l\leqslant m),\ \forall \p\in I\Big\}\Big)\\
    &=&\mathrm{Card}\Big(\Big\{(a_1,...,a_m)\in \mathscr{A}:\ (a_l)\geqslant wD_I\ (1\leqslant l\leqslant m)\Big\}\Big)\\
    &=&\Big(\mathrm{Card}\Big(\big\{a\in K^{\times}:\ (a)+D(\vec{N})\geqslant wD_I\big\}\cup \big\{0\big\}\Big)\Big)^m\\
    &=&\mathrm{Card}\Big(\big(L\big(D(\vec N)-wD_I\big)\big)^m\Big)\\
    &=&q^{m\ell(D(\vec{N})-wD_I)}.
\end{eqnarray*}
Note that if $w\deg D_I>N=\deg D(\vec{N})$, then by Lemma 5.3 in \cite[p.\,48]{rosen}, we have
$$
\ell\big(D(\vec{N})-wD_I\big)=0,
$$
and thus $\mathscr{A}(I)=0$. Hence by Lemma \ref{lem1}, we have
\begin{eqnarray}
V_m^w(\vec{N})&=&S(\mathscr{A},\mathscr{P})=\sum_{I\subseteq \mathscr{D}}\mu_S(D_I)\mathscr{A}(I)\\
&=&\sum_{I\subseteq \mathscr{D},\atop \deg D_I\leqslant N/w}\mu_S(D_I)\mathscr{A}(I)\\
&=&\sum_{D\in\mathrm{Div}_S,\, D\geqslant 0,\atop \deg D\leqslant N/w}\mu_S(D)q^{m\ell(D(\vec{N})-wD)}.\label{5}
\end{eqnarray}
Indeed for all $D\in\mathrm{Div}_S$ with $D\geqslant 0$ and $\deg D\leqslant N/w$,
we have $\mu_S(D)\neq 0$ if and only if $D=D_I$ for some $I\subseteq \mathscr{D}$.

Now fix $D\in\mathrm{Div}_S$ satisfying $D\geqslant 0$ and $\deg D\leqslant N/w$.
By Lemma \ref{lem2}, we have
\begin{eqnarray*}
\ell\big(D(\vec{N})-wD\big)=N-w\deg D-g_K+1+\ell\big(C-D(\vec{N})+wD\big).
\end{eqnarray*}
If $\deg \big(D(\vec{N})-wD\big)\geqslant 2g_K-1$, then by Lemma \ref{lem2}, we obtain
\begin{eqnarray}
\ell\big(D(\vec{N})-wD\big)=N-w\deg D-g_K+1.\label{7}
\end{eqnarray}
If $\deg \big(D(\vec{N})-wD\big)\leqslant 2g_K-2$, then $g_K\geqslant 1$, since we have
$$
\deg \big(D(\vec{N})-wD\big)=N-w\deg D\geqslant 0.
$$
Hence there exists a divisor $D_0\geqslant 0$ such that $\deg D_0=2g_K-1$ (see \cite[p.\,52]{rosen}).
Note that $L\big(D(\vec N)-wD\big)\subseteq L\big(D(\vec N)-wD+D_0\big)$, and we have
$$
\deg\big(D(\vec{N})-wD+D_0\big)\geqslant \deg(D_0)=2g_K-1.
$$
Thus by Lemma \ref{lem2}, we have
 \begin{eqnarray}
\ell\big(D(\vec{N})-wD\big)&\leqslant & \ell\big(D(\vec{N})-wD+D_0\big)\\
&=&\deg \big(D(\vec{N})-wD+D_0\big)-g_K+1\\
&=&N-w\deg D+g_K\leqslant  3g_K-2.\label{10}
\end{eqnarray}

For an integer $n\geqslant 0$, denote by $b_{S,n}$
the number of effective divisors in $\mathrm{Div}_S$ of degree $n$.
By the estimate in \cite[p.\,52]{rosen}), for all integers $n>2g_K-2$, we have
\begin{eqnarray}\label{eqn3}
0\leqslant b_{S,n}\leqslant h_K\frac{q^{n-g_K+1}-1}{q-1}=O_K(q^n).
\end{eqnarray}

Recall here that by the formula (\ref{2}),
we have $\mathrm{N}(D)=q^{\deg D}$, for all $D\in\mathrm{Div}_K$.
Recall also that as in the canonical case, we have
\begin{eqnarray}
\sum\limits_{D\in\mathrm{Div}_S,\, D\geqslant 0}\frac{\mu_S(D)}{(\mathrm{N}(D))^{wm}}=\frac{1}{\zeta_S(wm)}.
\end{eqnarray}
Then from the formulas (\ref{5}), (\ref{7}), (\ref{10}), and (\ref{eqn3}),  we deduce
\begin{eqnarray*}
&&V_m^w(\vec{N}) =\sum_{D\in\mathrm{Div}_S,\, D\geqslant 0,\atop \deg D\leqslant N/w}\mu_S(D)q^{m\ell(D(\vec{N})-wD)}\\
&=&\sum_{D\in\mathrm{Div}_S,\, D\geqslant 0,\atop w\deg D\leqslant N-2g_K+1}\mu_S(D)q^{m(N-w\deg D-g_K+1)}
            +\sum_{D\in\mathrm{Div}_S,\, D\geqslant 0,\atop N-2g_K+2\leqslant w\deg D\leqslant N}O_K(1)\nonumber\\
&=&q^{m(N+1-g_K)}\sum_{D\in\mathrm{Div}_S,\, D\geqslant 0,\atop w\deg D\leqslant N-2g_K+1}\frac{\mu_S(D)}{(\mathrm{N}(D))^{wm}}
            +\sum_{D\in\mathrm{Div}_S,\, D\geqslant 0,\atop  w\deg D\leqslant N}O_K(1)\nonumber\\
&=&q^{m(N+1-g_K)}\sum_{D\in\mathrm{Div}_S,\, D\geqslant 0,\atop w\deg D\leqslant N-2g_K+1}\frac{\mu_S(D)}{(\mathrm{N}(D))^{wm}}
            +\sum_{0\leqslant n\leqslant N/w}O_K(b_{S,n})\nonumber\\
&=&q^{m(N+1-g_K)}\sum_{D\in\mathrm{Div}_S,\, D\geqslant 0}\frac{\mu_S(D)}{(\mathrm{N}(D))^{wm}}\\
&&-\frac{q^{mN}}{q^{m(g_K-1)}}\sum_{D\in\mathrm{Div}_S,\, D\geqslant 0, \atop w\deg D>  N-2g_K+1}\frac{\mu_S(D)}{(\mathrm{N}(D))^{wm}}+O_K(q^{N/w})\\
&=&\frac{q^{m(N+1-g_K)}}{\zeta_S(wm)}
-\frac{q^{mN}}{q^{m(g_K-1)}}\sum_{D\in\mathrm{Div}_S,\, D\geqslant 0, \atop  w\deg D>N-2g_K+1}\frac{\mu_S(D)}{(\mathrm{N}(D))^{wm}}+O_K(q^{N/w}).
\end{eqnarray*}

By the same estimate in (\ref{eqn3}) and noting that $wm\geqslant 2$, we obtain
\begin{eqnarray*}
\Big|\sum_{D\in\mathrm{Div}_S,\, D\geqslant 0, \atop w\deg D>N-2g_K+1}\frac{\mu_S(D)}{(\mathrm{N}(D))^{wm}}\Big|&\leqslant &
\sum_{D\in\mathrm{Div}_S,\, D\geqslant 0, \atop w\deg D>N-2g_K+1}q^{-wm\deg D}\\
&=&\sum_{n>(N-2g_K+1)/w}b_{S,n}q^{-wmn}\\
&=&O_K\bigg(\sum_{n>(N-2g_K+1)/w}q^{n-wmn}\bigg)\\
&=&O_K\bigg(\int_{(N-2g_K+1)/w}^{+\infty}q^{(1-wm)x}\mathrm{d} x\bigg)\\
&=& O_K(q^{(1-wm)N/w}),
\end{eqnarray*}
which implies directly the desired result.

\vskip 5pt

\begin{corollary}\label{cor1} Let $w,m\geqslant 1$ be integers such that $wm\geqslant 2$. Then
\begin{eqnarray}
\lim\limits_{N\rightarrow \infty}\frac{V_m^w(\vec{N})}{\kappa_m(\vec{N})}=\frac{1}{\zeta_S(wm)},
\end{eqnarray}
where $\kappa_m(\vec{N})=\mathrm{Card}\big((L(D(\vec{N}))^m\big)=q^{m\ell(D(\vec{N}))}$.
\end{corollary}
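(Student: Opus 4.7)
The plan is to deduce this limit directly from Theorem \ref{thm1} together with the Riemann-Roch formula in Lemma \ref{lem2}, so the whole argument is essentially a comparison of exponents.

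First I would fix $\vec N = (N_1,\ldots,N_{r+1})$ with $N = \sum_{j=1}^{r+1}N_j\deg\p_j$ sufficiently large. As soon as $N\geqslant 2g_K-1$, Lemma \ref{lem2} applied to the divisor $D(\vec N)$ gives
$$
\ell\bigl(D(\vec N)\bigr)=N-g_K+1,
$$
so that
$$
\kappa_m(\vec N)=q^{m\ell(D(\vec N))}=q^{m(N+1-g_K)}.
$$
Thus for all large $N$ the denominator is exactly the main term appearing in Theorem \ref{thm1}.

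Next I would invoke Theorem \ref{thm1} to write
$$
V_m^w(\vec N)=\frac{q^{m(N+1-g_K)}}{\zeta_S(wm)}+O_K\bigl(q^{N/w}\bigr),
$$
and divide by $\kappa_m(\vec N)=q^{m(N+1-g_K)}$ to obtain
$$
\frac{V_m^w(\vec N)}{\kappa_m(\vec N)}=\frac{1}{\zeta_S(wm)}+O_K\bigl(q^{N/w-m(N+1-g_K)}\bigr).
$$
The exponent in the error term is
$$
\frac{N}{w}-m(N+1-g_K)=\frac{(1-wm)N}{w}+m(g_K-1).
$$
Since $wm\geqslant 2$, we have $1-wm\leqslant -1<0$, hence this exponent tends to $-\infty$ as $N\to\infty$, and therefore the error term tends to $0$.

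There is really no serious obstacle here: the only thing to check carefully is that the hypothesis $wm\geqslant 2$ is exactly what makes $q^{N/w}$ negligible compared with $q^{m(N+1-g_K)}$, which is why this same hypothesis already appears in Theorem \ref{thm1}. Letting $N\to\infty$ then gives
$$
\lim_{N\to\infty}\frac{V_m^w(\vec N)}{\kappa_m(\vec N)}=\frac{1}{\zeta_S(wm)},
$$
as asserted.
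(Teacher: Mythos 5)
Your argument is correct and is essentially the paper's own proof: both apply Lemma \ref{lem2} to get $\ell(D(\vec N))=N+1-g_K$ once $N\geqslant 2g_K-1$, identify $\kappa_m(\vec N)$ with the main term of Theorem \ref{thm1}, and conclude; you merely make explicit the (routine) verification that the error exponent $\frac{(1-wm)N}{w}+m(g_K-1)$ tends to $-\infty$ under the hypothesis $wm\geqslant 2$.
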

\begin{proof} If $\deg D(\vec{N})=N\geqslant 2g_K-1$, then by Lemma \ref{lem2}, we have
\begin{eqnarray*}
\ell(D(\vec{N}))=N+1-g_K.
\end{eqnarray*}
Hence $\kappa_m(\vec{N})=q^{m(N+1-g_K)}$, and the conclusion comes from Theorem \ref{thm1}.
\end{proof}

\begin{remark}\label{remark5}
Corollary \ref{cor1} means that the probability for $m$ elements in $\co_S$ chosen at random to be $w$-coprime is equal to $\frac{1}{\zeta_S(wm)}$.
It is worth noting that the case that $w=1$ has already been treated in \cite{MS} by a quite different method.
\end{remark}

As pointed out in the introduction, a function field Schanuel's theorem was observed by Serre (see for example \cite[p.\,19]{serre})
and proved independently by Dipippo \cite{Dipi}  and Wan (see \cite[Corollary 4.3]{wandaqing}) with a better error term. With the help of geometry of numbers, very recently Phillips
obtained a new proof of the above result but with a weak error term (see \cite[Theorem 4.1]{phi}).
 Below we apply Theorem \ref{thm1} to exhibit another new and elementary proof (see Corollary \ref{cor:2}).
	
	We recall below some definitions and notation.
	
	Fix an integer $n\geqslant 1$.  For all $\mathbf{x}=(x_0:x_1:\cdots:x_n)\in \mathbb{P}^n(K)$,
the (logarithmic) height of $\mathbf{x}$ is defined by
\begin{equation}\label{eq:height-def}
		\mathbf{h}_K(\mathbf{x})=-\sum_{\mathfrak{p}\in M_K}\big(\min_{0\leqslant j\leqslant n}v_{\mathfrak{p}}(x_j)\big)\deg\mathfrak{p}.
\end{equation}
Equivalently, let $\mathrm{inf}_i(x_i)$ denote the greatest divisor $D$
satisfying $D \leqslant (x_j)$ for all integers $j\ (0\leqslant j\leqslant n)$,
where $(x_j)$ is the principal divisor defined by $x_j$,
then
\begin{eqnarray}
\mathbf{h}_K(\mathbf{x})=-\deg \mathrm{inf}_i(x_i).
\end{eqnarray}
By Proposition 5.1 in \cite[p.\,47]{rosen}, for all $y\in K^{\times}$,
we have
\begin{eqnarray}\label{19}
\sum\limits_{\mathfrak{p}\in M_K}v_{\mathfrak{p}}(y)\deg\mathfrak{p}=\deg (y)=0,
\end{eqnarray}
thus $\mathbf{h}_K(\mathbf{x})$ is independent of the choice of $(x_j)_{0\leqslant j\leqslant n}$.
Finally for all integers $B>0$, let $N_K(B,n)$ denote the number of all $\mathbf{x}\in \mathbb{P}^n(K)$
	satisfying $\mathbf{h}_K(\mathbf{x})\leqslant B$.
	
With the notation as above, we have the following result.

\begin{corollary}\label{cor:2}
Let $n\geqslant 1$ be an integer. Then
\begin{eqnarray}
N_K(B,n)=\frac{h_Kq^{(n+1)(B-g_K+1)}}
		{(q-1)\zeta_K(n+1)}\cdot\frac{1}{1-q^{-n-1}}+O_K(q^{B}).
\end{eqnarray}
\end{corollary}
\begin{proof}
	Fix $\infty\in M_K$. Put $d=\deg\infty$ and $S=\{\infty\}$. Then $r=\mathrm{Card}(S)-1=0$,
and $\co_S^{\times}\simeq\mathbb{F}_q^{\times}$ (see \cite[p.\,243, Proposition 14.2]{rosen}).
Let $\mathrm{Cl}_S$ be the ideal class group of $\co_S$, and
put $h_S=\mathrm{Card}(\mathrm{Cl}_S)$, the class number of $\co_S$.
Let $\mathfrak{c}_1, \ \mathfrak{c}_2, \ldots, \mathfrak{c}_{h_S}$ be integral ideals of
$\co_S$, which form a complete system of representatives in $\mathrm{Cl}_S$.
By the Chebotarev density theorem (see for example \cite[p.\,125, Theorem 9.13A]{rosen}),
each ideal class contains infinitely prime ideals. Thus we can also suppose that
all the $\mathfrak{c}_j$ are prime and different from $\co_S$.
Let $\p_j$ be the prime divisor associated with $\mathfrak{c}_j$, i.e., $\p_j=D(\mathfrak{c}_j)\ (1\leqslant j\leqslant h_S)$.
Then all the $\p_j$ are different from $\infty$.	
	
For all integers $m\ (0\leqslant m\leqslant n)$,
let $\mathbb{P}^{n,m}(K)$
be the set of all points in $\mathbb{P}^{n}(K)$ with exactly $m$ zero coordinates.
For all integers $B>0$, let $N_{K,m}(B,n)$ be the number of all $\mathbf{x}\in \mathbb{P}^{n,m}(K)$
with $\mathbf{h}_K(\mathbf{x})\leqslant B$.  Then
\begin{equation}\label{eq:Nsum}
N_K(B,n)=\sum_{m=0}^{n}N_{K,m}(B,n).
\end{equation}

	For all $\mathbf{x}\in \mathbb{P}^{n}(K)$, we can write $\mathbf{x}=(x_0:x_1:\cdots:x_n)$ with $x_j\in\co_S\ (0\leqslant j\leqslant n)$.
Then we can find $y\in K^{\times}$	and a unique integer $k\ (1\leqslant k\leqslant h_S)$ such that
$$
y\langle x_0,x_1,\ldots,x_n\rangle_S=\mathfrak{c}_k.
$$
In particular, we have $yx_j\in \co_S\ (0\leqslant j\leqslant n)$.
Up to replacing $x_j$ by $yx_j$, we can thus suppose $\langle x_0,x_1,\ldots,x_n\rangle_S=\mathfrak{c}_k$. Then
\begin{eqnarray}\label{eq:h}
\mathbf{h}_K(\mathbf{x})&=&-\sum_{\mathfrak{p}\in M_K}\big(\min_{0\leqslant j\leqslant n}v_{\mathfrak{p}}(x_j)\big)\deg\mathfrak{p}\\
&=&-\sum_{\mathfrak{p}\in M_K\setminus S}\big(\min_{0\leqslant j\leqslant n}v_{\mathfrak{p}}(x_j)\big)\deg\mathfrak{p}
-d\big(\min_{0\leqslant j\leqslant n}v_{\infty}(x_j)\big)\\
&=&-\deg \mathfrak{p}_k-d\big(\min_{0\leqslant j\leqslant n}v_{\infty}(x_j)\big).\label{24}
\end{eqnarray}
Here by the formulas (\ref{eq:3}) and (\ref{eq:4}), and by Lemma 4-2-1\,(4) in \cite[p.\,127]{weiss}, we have
\begin{eqnarray}\label{v_p}
v_{\p}(\p_k)=v_{\p}(D(\mathfrak{c}_k))=v_{\p}(\mathfrak{c}_k)=\min\limits_{0\leqslant j\leqslant n}v_{\mathfrak{p}}(x_j),
\end{eqnarray}
for all $\p\in M_K\setminus S$.
Note that by the formula (\ref{19}), for all $y\in K^{\times}$, we have
\begin{eqnarray}\label{25}
-dv_{\infty}(y)=\sum_{\p\in M_K\setminus S}v_{\p}(y)\deg \p=\deg \langle y\rangle_S.
\end{eqnarray}
Then by combining the formulas (\ref{24}) and (\ref{25}) together, we obtain
\begin{eqnarray}\label{26}
\mathbf{h}_K(\mathbf{x})=-\deg \mathfrak{p}_k+\max_{0\leqslant j\leqslant n} \deg \langle x_j\rangle_S.
\end{eqnarray}
Thus $\mathbf{h}_K(\mathbf{x})\leqslant B$ if and only if $\deg \langle x_j\rangle_S\leqslant  B+\deg\p_k\ (0\leqslant j\leqslant n)$.

For all integers $k,\ell$ with $1\leqslant k\leqslant h_S$ and $0\leqslant \ell\leqslant n$,
let $N_{k,\ell}(B)$ be the number of $(a_0,\ldots,a_\ell)\in (\co_S\setminus\{0\})^{\ell+1}$
satisfying $\deg \langle a_j\rangle_S\leqslant B+\deg\p_k\ (0\leqslant j\leqslant \ell)$, and
$$
\langle a_0,\ldots, a_{\ell}\rangle_S=\mathfrak{c}_k.
$$
Note that if $(a_0,\ldots,a_\ell)$ and $(a_0',\ldots,a_\ell')$ are both counted for $N_{k,\ell}(B)$
and there exists some $y\in K^{\times}$ such that $a_j'=ya_j\ (0\leqslant j\leqslant \ell)$,
then by the formula (\ref{v_p}), we have $v_{\p}(y)=0$ for all $\p\in M_K\setminus S$,
thus $y\in \co_S^{\times}\simeq \mathbb{F}_q^{\times}$.
Note also that each point in $\mathbb{P}^{n,m}(K)$ has exactly $n+1-m$ nonzero coordinates,
and choosing $m$ zero coordinates gives $\binom{n+1}{m}$ possibilities, hence
\begin{equation}\label{eq:NKm}
N_{K,m}(B,n)=\frac{1}{q-1}\binom{n+1}{m}\sum_{k=1}^{h_S} N_{k,\,n-m}(B).
\end{equation}

	Put $S_k=\{\mathfrak{p}_k,\infty\}$, and work in the ring of $\co_{S_k}$.

For all integers $m\ (0\leqslant m\leqslant n)$,
and for all $\vec{N}=(N_1,N_2)\in \mathbb{Z}^2$ such that $N:=N_1\deg \p_k+ N_2\deg\infty=N_1\deg \p_k+dN_2>0$, define
$$
\mathbb{W}_{k,m}(\vec{N})=\{(a_0,\ldots,a_m)
\in\big(L(D(\vec N))\setminus\{0\}\big)^{m+1}:
	a_0,\ldots,a_m \text{ are coprime}\big\}.
$$
Then $(a_0,\ldots,a_m)\in \mathbb{W}_{k,m}(\vec{N})$ if and only if  $(a_0,\ldots,a_m)\in \big(\co_{S_k}\setminus\{0\}\big)^{m+1}$ is such that
$\langle a_0,\ldots,a_m\rangle_{S_k}=\co_{S_k}$ and $v_{\p_k}(a_j)\geqslant -N_1,\ v_{\infty}(a_j)\geqslant -N_2\ (0\leqslant j\leqslant m)$.

Fix an integer $B>d+\max\limits_{1\leqslant j\leqslant h_S}\deg \p_j$. For all integers $k\ (1\leqslant k\leqslant h_S)$, define
$$
q_k:=\lfloor\frac{B+\deg \p_k}{d}\rfloor.
$$
Then by the formula (\ref{24}), we obtain that
\begin{eqnarray}\label{28}
\textrm{$\mathbf{h}_K(\mathbf{x})\leqslant B$ if and only if
$\min_{0\leqslant j\leqslant n} v_\infty(x_j)\geqslant -q_k$}.
\end{eqnarray}

Put $\vec{N}(k)=(-1,\,q_k)$, and $\vec{M}(k)=(-2,\,q_k)$. Since $B>d+\max\limits_{1\leqslant j\leqslant h_S}\deg \p_j$, then
$$
N(k):=-\deg \p_k+dq_k>0,\ \textrm{and}\ M(k):=-2\deg\p_k+dq_k>0.
$$
Moreover $\mathbb{W}_{k,m}(\vec{M}(k))\subseteq \mathbb{W}_{k,m}(\vec{N}(k))$, and $(a_0,\ldots, a_m)\in \mathbb{W}_{k,m}(\vec{N}(k))\setminus \mathbb{W}_{k,m}(\vec{M}(k))$ if and only if $(a_0,\ldots,a_m)\in \big(\co_{S_k}\setminus\{0\}\big)^{m+1}$ is such that
$\langle a_0,\ldots,a_m\rangle_{S_k}=\co_{S_k}$, and
$$
v_{\p_k}(a_j)=1,\ v_{\infty}(a_j)\geqslant -q_k\ (0\leqslant j\leqslant m).
$$
Equivalently, this means that $(a_0,\ldots,a_m)\in \big(\co_{S}\setminus\{0\}\big)^{m+1}$ is such that
$$
\langle a_0,\ldots,a_m\rangle_{S}=\mathfrak{c}_k,\ \textrm{and}\ v_{\infty}(a_j)\geqslant -q_k\ (0\leqslant j\leqslant m).	
$$
Hence by  the formulas (\ref{24}) and (\ref{26}),
and the property (\ref{28}), we have
\begin{equation}\label{eq:NtoW}
N_{k,m}(B)=\mathrm{Card}(\mathbb{W}_{k,m}(\vec{N}(k)))-\mathrm{Card}(\mathbb{W}_{k,m}(\vec{M}(k))).
\end{equation}
	
Put $W_{k,m}(\vec{N}(k))=\mathrm{Card}(\mathbb{W}_{k,m}(\vec{N}(k)))$ and $W_{k,m}(\vec{M}(k))=\mathrm{Card}(\mathbb{W}_{k,m}(\vec{M}(k)))$.
By combining all the formulas (\ref{eq:Nsum}), (\ref{eq:NKm}), and (\ref{eq:NtoW}) together, we obtain
\begin{eqnarray}
N_K(B,n)&=&\frac{1}{q-1}\sum_{m=0}^{n}\binom{n+1}{m}\sum_{k=1}^{h_S} \big(W_{k,n-m}(\vec{N}(k))-W_{k,n-m}(\vec{M}(k))\big)\\
&=&\frac{1}{q-1}\sum_{k=1}^{h_S}\sum_{m=0}^{n}\binom{n+1}{m} \big(W_{k,n-m}(\vec{N}(k))-W_{k,n-m}(\vec{M}(k))\big)\\
&=&\frac{1}{q-1}\sum_{k=1}^{h_S}\big(V_{k,n+1}^1(\vec{N}(k))-V_{k,n+1}^1(\vec{M}(k))\big),\label{eq32}
\end{eqnarray}
where $V_{k,n+1}^1(\vec{N}(k))$ and $V_{k,n+1}^1(\vec{M}(k)$ are defined by
\begin{eqnarray*}
V_{k,n+1}^1(\vec{N})=\mathrm{Card}\left(\left\{(a_0,...,a_{n})\in \co_{S_k}^{n+1}: \begin{array}{c}
              \textrm{$a_0,...,a_n$ are coprime, $v_{\mathfrak{p}_k}(a_j)\geqslant 1$,} \\
              v_{\infty}(a_j)\geqslant -q_k\ (0\leqslant j\leqslant n)
              \end{array}\right\}\right),\\
V_{k,n+1}^1(\vec{M})=\mathrm{Card}\left(\left\{(a_0,...,a_{n})\in \co_{S_k}^{n+1}: \begin{array}{c}
              \textrm{$a_0,...,a_n$ are coprime, $v_{\mathfrak{p}_k}(a_j)\geqslant 2$,} \\
              v_{\infty}(a_j)\geqslant -q_k\ (0\leqslant j\leqslant n)
              \end{array}\right\}\right).
\end{eqnarray*}

By applying Theorem \ref{thm1} with $V_{k,n+1}^1(\vec{N}(k))$ and $V_{k,n+1}^1(\vec{M}(k))$, we have
\begin{eqnarray}
V_{k,n+1}^1(\vec{N}(k))&=&\frac{q^{(n+1)(N(k)+1-g_K)}}{\zeta_{S_k}(n+1)}+O_K(q^{N(k)}),\label{33}\\
V_{k,n+1}^1(\vec{M}(k))&=&\frac{q^{(n+1)(M(k)+1-g_K)}}{\zeta_{S_k}(n+1)}+O_K(q^{M(k)}).\label{34}
\end{eqnarray}
Write $B+\deg\mathfrak{c}_k=B+\deg\p_k=d q_k+r_k$, with $0\leqslant r_k<d$. Then
$$
N(k)=B-r_k,\ \textrm{and}\ M(k)=B-\deg\p_k-r_k,
$$
and by the formulas (\ref{33}) and (\ref{34}), we obtain
\begin{eqnarray*}
&&V_{k,n+1}^1(\vec{N}(k))-V_{k,n+1}^1(\vec{M}(k))\\
&=&\frac{q^{(n+1)(B-r_k+1-g_K)}(1-q^{-(n+1)\deg\p_k})}
		{\zeta_{S_k}(n+1)}
		+O_K(q^{B-r_k})\nonumber\\
&=&\frac{q^{(n+1)(B+1-g_K-r_k)}(1-q^{-(n+1)\deg\p_k})}
		{\zeta_K(n+1)(1-q^{-(n+1)\deg\p_k})(1-q^{-(n+1)d})}
		+O_K(q^{B-r_k})\nonumber\\
		&=&\frac{q^{(n+1)(B+1-g_K-r_k)}}
		{\zeta_K(n+1)(1-q^{-(n+1)d})}+O_K(q^{B}).
\end{eqnarray*}
Put	$\beta:=\sum\limits_{k=1}^{h_S}q^{-(n+1)r_k}$. By the above equality and the formula (\ref{eq32}), we have
\begin{eqnarray}
N_K(B,n)&=&\frac{\beta q^{(n+1)(B+1-g_K)}}
		{(q-1)\zeta_K(n+1)(1-q^{-(n+1)d})}+O_K(q^{B}).
\end{eqnarray}
Note that for all integers $k\ (1\leqslant k\leqslant h_S)$, we have $0\leqslant r_k<d$
and
$$
r_k\equiv B+\deg\mathfrak{c}_k\pmod d.
$$
Note also that $\mathfrak{c}_1,\ldots, \mathfrak{c}_{h_S}$ form a complete system of representatives in $\mathrm{Cl}_S$,
 and the degree map $\deg\colon\mathrm{Cl}_{S}\to\mathbb{Z}/d\mathbb{Z}$ is a surjective homomorphism whose kernel contains $h_K$ elements (see \cite[p.\,243, Proposition 14.1]{rosen}).
Hence
$$
\{r_k: 1\leqslant k\leqslant h_S\}=\{0,1,\ldots, d-1\}\
\textrm{and}\ \mathrm{Card}\{k: r_k=j\}=h_K\ (0\leqslant j\leqslant d-1).
$$
Consequently we obtain
$$
\beta=\sum_{j=0}^{d-1} h_K q^{-(n+1)j}
	=h_K\frac{1-q^{-(n+1)d}}{1-q^{-(n+1)}}.
$$
This ends the proof of Corollary \ref{cor:2}.
\end{proof}

\begin{remark} Serre counted $A_K(B,n)$ the number of all $\mathbf{x}\in \mathbb{P}^n(K)$
	with $\mathbf{h}_K(\mathbf{x})=B$. By Corollary \ref{cor:2}, we have
$$
A_K(B,n)=\mathrm{N}_K(B,n)-\mathrm{N}_K(B-1,n)=\frac{h_Kq^{(n+1)(B-g_K+1)}}
		{(q-1)\zeta_K(n+1)}+O_K(q^{B}).
$$
The error term is worse than that of  Dipippo and Wan (which is $O_K(q^{\frac{B}{2}+\varepsilon})$, for all $\varepsilon>0$),
but better than that of Serre and Phillips.
\end{remark}

\subsection{Proof of Theorem 2.}

Fix an integer $n\geqslant 1$. Let $\mathscr{A}$ be the set of all $m$-tuples $(\mathfrak{a}_1,\dots,\mathfrak{a}_m)$ of integral ideals in $\co_S$ with $\deg\mathfrak{a}_j\leqslant n$ ($1\leqslant j\leqslant m)$.
Let $\mathscr{D}$ be the set of all nonzero prime ideals $\q$ in $\co_S$ satisfying $\deg \q\leqslant n/w$.
 For all $\q\in \mathscr{D}$, define
$$
P_{\q}=\big\{(\mathfrak{a}_1,\dots,\mathfrak{a}_m)\in \mathscr{A}: \langle\mathfrak{a}_1,\dots,\mathfrak{a}_m\rangle_S\subseteq  \q^w\big\}.
$$
Put $\mathscr{P}=\{P_{\q}: \q\in\mathscr{D}\}$. For all $\q\in\mathscr{D}$ and all $\mathfrak{a}=(\mathfrak{a}_1,...,\mathfrak{a}_m)\in \mathscr{A}$, we say that $\mathfrak{a}$ satisfies the property $P_{\q}$ if $\mathfrak{a}\in P_{\q}$.
For all $I\subseteq \mathscr{D}$, put $\mathfrak{m}_I=\prod\limits_{\q\in I}\q$. Then
\begin{eqnarray*}
\mathscr{A}(I)&=&\mathrm{Card}\Big(\Big\{(\mathfrak{a}_1,\dots,\mathfrak{a}_m)\in \mathscr{A}:\ \mathfrak{a}_l\subseteq  \q^w\ (1\leqslant l\leqslant m),\ \forall \q\in I\Big\}\Big)\\
    &=&\mathrm{Card}\Big(\Big\{(\mathfrak{a}_1,\dots,\mathfrak{a}_m)\in \mathscr{A}:\ \mathfrak{a}_l\subseteq  \mathfrak{m}_I^w\Big\}\Big)\\
    &=&\Big(\mathrm{Card}\Big(\Big\{\mathfrak{a}:\ \mathfrak{a}\subseteq  \mathfrak{m}_I^w,\ \textrm{$\mathfrak{a}$ is an integral ideal in $\mathscr{O}_S$},\ \deg\mathfrak{a}\leqslant n\Big\}\Big)\Big)^m\\
    &\stackrel{\mathfrak{b}=\mathfrak{a}\mathfrak{m}_I^{-w}}{=\hskip -2pt=\hskip -2pt=\hskip -2pt=\hskip -2pt=\hskip -2pt=}&\Big(\mathrm{Card}\Big(\Big\{\mathfrak{b}:\ \textrm{$\mathfrak{b}$ is an integral ideal in $\mathscr{O}_S$},\ \deg\mathfrak{b}\leqslant n-w\deg\mathfrak{m}_I\Big\}\Big)\Big)^m\\
    &=&\big(j_S(n-w\deg\mathfrak{m}_I)\big)^m.
\end{eqnarray*}
Recall here that as in the canonical case, we also have
$$
\sum_{\mathfrak{b}}\frac{\mu_S(\mathfrak{b})}{q^{mw\deg\mathfrak{b}}}=\frac{1}{\zeta_S(wm)}.
$$
Hence by the formula (\ref{eq5}) and Lemma \ref{lem4}, we have
\begin{eqnarray*}
Q_{S,m}^w(n)&=&S(\mathscr{A},\mathscr{P})=\sum_{I\subseteq \mathscr{D}}\mu_S(\mathfrak{m}_I)\mathscr{A}(I)\\
&=&\sum_{I\subseteq \mathscr{D}}\mu_S(\mathfrak{m}_I)\big(j_S(n-w\deg\mathfrak{m}_I)\big)^m\\
&=&\sum_{\deg\mathfrak{b}\leqslant n/w}\mu_S(\mathfrak{b})\big(j_S(n-w\deg\mathfrak{b})\big)^m\\
&=&\sum_{\deg\mathfrak{b}\leqslant n/w}\mu_S(\mathfrak{b})\big(c_{K,S}q^{n - w\deg\mathfrak{b}}+O_{K,S}(1)\big)^m\nonumber\\
&=&c_{K,S}^mq^{mn}\sum_{\deg\mathfrak{b}\leqslant n/w}\mu_S(\mathfrak{b})q^{-mw\deg\mathfrak{b}}\\
&&+O_{K,S}\bigg(\sum_{\deg\mathfrak{b}\leqslant n/w}q^{(n   -  w\deg\mathfrak{b})(m-1)}\bigg)\nonumber\\
&=&c_{K,S}^mq^{mn}\sum_{\mathfrak{b}}\frac{\mu_S(\mathfrak{b})}{q^{mw\deg\mathfrak{b}}}
          -c_{K,S}^mq^{mn}\sum_{\deg\mathfrak{b}> n/w}\frac{\mu_S(\mathfrak{b})}{q^{mw\deg\mathfrak{b}}}\\
&&          +O_{K,S}\bigg(\sum_{\deg\mathfrak{b}\leqslant n/w}q^{(n-w\deg\mathfrak{b})(m-1)}\bigg)\nonumber\\
&=&\frac{c_{K,S}^mq^{mn}}{\zeta_S(mw)}+R_1+O_{K,S}(R_2).
\end{eqnarray*}
Here the remainder terms $R_1$ and $R_2$ are defined as
\begin{eqnarray*}
R_1&=&-c_{K,S}^mq^{mn}\sum_{\deg\mathfrak{b}> n/w}\frac{\mu_S(\mathfrak{b})}{q^{mw\deg\mathfrak{b}}},\\
R_2&=&\sum_{\deg\mathfrak{b}\leqslant n/w}q^{(n-w\deg\mathfrak{b})(m-1)}.
\end{eqnarray*}

Recall that for all integers $k\geqslant 0$, we denote by $b_{S,k}$
the number of integral ideals of degree $k$ in $\mathscr{O}_S$, which
is also the number of effective divisors of degree~$k$ in $\mathrm{Div}_S$.
Recall also that by the inequality (\ref{eqn3}), we have $b_{S,k}=O_{K}(q^{k})$.
Consequently for the remainder term $R_1$, we have
\begin{eqnarray*}
|R_1|  &=&\Big|c_{K,S}^mq^{mn}\sum_{\deg\mathfrak{b}> n/w}\frac{\mu_S(\mathfrak{b})}{q^{mw\deg\mathfrak{b}}}\Big|\\
&\leqslant &c_{K,S}^mq^{mn}\sum_{\deg\mathfrak{b}> n/w}\frac{1}{q^{mw\deg\mathfrak{b}}}\\
&=&c_{K,S}^mq^{mn}\sum_{k> n/w}\frac{b_{S,k}}{q^{mwk}}\\
&\ll_{K,S} & q^{mn}\sum_{k>n/w}\frac{1}{q^{(mw-1)k}}\\
&=&O_{K,S}(q^{n/w}).
\end{eqnarray*}

For the remainder term $R_2$, we have
\begin{eqnarray*}
R_2 &=&\sum_{\deg\mathfrak{b}\leqslant n/w}q^{(n-w\deg\mathfrak{b})(m-1)}\\
&=&\sum_{k\leqslant n/w}b_{S,k}q^{(n-wk)(m-1)}\\
&\ll_{K,S}&  q^{n(m-1)}\sum_{k \leqslant n/w}q^{-wk(m-1)+k}\\
&\ll_{K,S}&
\begin{cases}
q^{n/w},       & \text{if}\quad m=1,\\
nq^{n},               & \text{if}\quad m=2,w=1,\\
q^{n(m-1)},                        &\text{otherwise}.
\end{cases}
\end{eqnarray*}
Hence the desired result holds.

\begin{corollary}\label{cor2}
Let $w,m\geqslant 1$ be integers such that $wm\geqslant 2$. Then
\begin{eqnarray*}
\lim_{n\rightarrow \infty}\frac{Q_{S,m}^w(n)}{(j_S(n))^m}=\frac{1}{\zeta_S(w m)}.
\end{eqnarray*}
\end{corollary}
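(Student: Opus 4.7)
The plan is to combine the asymptotic from Theorem \ref{thm2} with the asymptotic for $j_S(n)$ from Lemma \ref{lem4}, since both share the constant $c_S$ and the exponential scale $q^n$. First, I would invoke Lemma \ref{lem4} to write $j_S(n) = c_S q^n + O_{K,S}(1)$, where $c_S = \frac{h_K G_S(q^{-1}) q^{1-g_K}}{(q-1)^2}$ matches exactly the constant appearing in Theorem \ref{thm2}. Raising to the $m$-th power by the binomial theorem gives
\begin{equation*}
(j_S(n))^m = c_S^m q^{mn} + O_{K,S}(q^{(m-1)n}),
\end{equation*}
where the implied constant depends only on $K,S$ and $m$ (for $m=1$ the error is $O_{K,S}(1)$, which is still $o(q^n)$).

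Next, I would use Theorem \ref{thm2} to expand the numerator as
\begin{equation*}
Q_{S,m}^w(n) = \frac{c_S^m q^{mn}}{\zeta_S(wm)} + E(n),
\end{equation*}
where $E(n)$ is the corresponding case-dependent error term. Dividing numerator and denominator by $q^{mn}$ yields
\begin{equation*}
\frac{Q_{S,m}^w(n)}{(j_S(n))^m} = \frac{c_S^m/\zeta_S(wm) + E(n) q^{-mn}}{c_S^m + O_{K,S}(q^{-n})}.
\end{equation*}

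It then suffices to verify $E(n) = o(q^{mn})$ in each of the three cases. When $m=1$ the hypothesis $wm \geqslant 2$ forces $w \geqslant 2$, so $q^{n/w}/q^n \to 0$; when $m=2, w=1$ we have $nq^n/q^{2n} \to 0$; and in the remaining case $q^{n(m-1)}/q^{mn} = q^{-n} \to 0$. The denominator tends to $c_S^m > 0$, so taking $n \to \infty$ gives the claimed limit $1/\zeta_S(wm)$. Since all technical work has already been done in Theorem \ref{thm2} and Lemma \ref{lem4}, the only real point to watch is that the error term from Theorem \ref{thm2} is strictly subordinate to $q^{mn}$ in every admissible case, which is exactly why the hypothesis $wm \geqslant 2$ is needed.
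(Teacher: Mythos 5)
Your proposal is correct and follows exactly the route the paper intends: the paper's proof is the one-line remark that the corollary "comes directly from Theorem \ref{thm2} and Lemma \ref{lem4}," and your write-up simply fills in the routine details (matching the constant $c_S$, expanding $(j_S(n))^m$, and checking that each error term is $o(q^{mn})$). No issues.
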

\begin{proof} The conclusion comes directly from Theorem \ref{thm2} and Lemma \ref{lem4}.
\end{proof}

\begin{remark} As for Corollary \ref{cor1}, the above result means that the probability for $m$ integral ideals in $\co_S$ chosen at random to be $w$-coprime is equal to $\frac{1}{\zeta_S(w m)}$.
In particular, for $m=1$ and $w=2$, we obtain the exact analog of the classical result that the probability for an integral ideal in $\co_S$ chosen at random to be squarefree is equal to $\frac{1}{\zeta_S(2)}$ (see \cite[Theorem 5.1]{sittinger}).
\end{remark}

\bigskip

\noindent \textbf{Acknowledgments.}
Si-Han Liu would like to warmly thank the Talent Fund of Beijing Jiaotong University
(Grant No.2026JBRC014) and the China Postdoctoral Science Foundation
(Grant Nos.\,GZC20232908 and 2024M753428) for partial financial support.
Zhe-Cheng Liu and Jia-Yan Yao would like to heartily thank
the National Natural Science Foundation of China (Grant No.\,12231013) for partial financial support.
Finally we would like to sincerely thank the anonymous referee for pertinent comments and valuable suggestions.

\smallskip

\bibliographystyle{plain}
\bibliography{biblio}

\vskip 20pt

\begin{tabular}{l}
Si-Han LIU   \\
Department of Mathematics \\
Tsinghua University \\
Beijing 100084 \\
School of Mathematics and Statistics\\
Beijing Jiaotong University\\
Beijing 100044\\
People's Republic of China \\
E-mail: shliu3@bjtu.edu.cn
\end{tabular}%
\vskip 10pt

\begin{tabular}{l}
Zhe-Cheng LIU (Corresponding author)\\
Department of Mathematics \\
Tsinghua University \\
Beijing 100084 \\
People's Republic of China \\
E-mail: liuzc21@mails.tsinghua.edu.cn
\end{tabular}%
\vskip 10pt

\begin{tabular}{l}
Jia-Yan YAO \\
Department of Mathematics\\
Tsinghua University\\
Beijing 100084\\
People's Republic of China\\
E-mail: jyyao@mail.tsinghua.edu.cn
\end{tabular}%

\end{document}